\newtheorem{lemma}{Lemma}
\newtheorem{thm}{Theorem}
\begin{document}
			
		\title{Chain lengths in the type $B$ Tamari lattice}				
		\author{Edward Early and Stephanie Thrash}
		\maketitle
		
\begin{abstract}
	We find the largest union of two chains in the type $B$ Tamari lattice by generalizing 
	the techniques used for the classical (type $A$) Tamari lattice with a description of 
	the type $B$ case due to Hugh Thomas.
\end{abstract}
				
The Tamari lattice is a partially ordered set originally defined in terms of binary-operation bracketings~\cite{ht}. 
Its elements are enumerated by Catalan numbers, so it is not surprising that there are several equivalent 
descriptions. One description comes from the Symmetric group in a way that can be generalized to other 
Coxeter groups, giving rise to Cambrian lattices~\cite{re}. Our focus is on the type $B$ Tamari lattice that 
thus arises. A description of the elements of this poset due to Thomas~\cite{th} allows us to use similar 
techniques to those employed in classical Tamari lattice to find the largest union of two chains~\cite{ee}.

The elements of the Tamari lattice $T_n$ can be represented by $n$-tuples $(v_1,\dots,v_n)$ of integers from 1 to $n$ 
following two rules~\cite{ht}:\\
(\textit{i}) each $v_i\ge i$\\
(\textit{ii}) if $i\le j\le v_i$, then $v_j\le v_i$.

The partial ordering is then componentwise comparison, so the technical complexity of the description of the 
elements yields a very simple ordering. For the purposes of this paper, the only thing we need to know about 
the type $B$ Tamari lattice $T_n^B$ is that its elements can be represented by $n$-tuples $(r_1,\dots,r_n)$ of symbols from 
$\{0,1,2,\dots,n-1,\infty\}$ following two rules~\cite{th}:\\
(\textit{i}) for $i<j$, $r_i\le r_j-(j-i)$ if $r_j-(j-i)$ is nonnegative\\
(\textit{ii}) if $\infty>r_i\ge i$, then $r_{n+i-r_i}=\infty$.

While the description of the elements in this case is evidently more complicated, the ordering remains 
componentwise comparison. The bottom element of the poset will be $(0,\dots,0)$ and the top 
element will be $(\infty,\dots,\infty)$. The first property implies that any time an entry $r_j=k$ 
for a finite number $k$, the prior $k$ entries must also be finite and are at most $0,1,\dots,k-1$ 
in that order. In particular, consecutive entries can only be equal if they are both 0 
or both $\infty$, and an $\infty$ can only be followed by 0 or $\infty$. The second property says 
that large entries in the $n$-tuple force infinities elsewhere. For example, anything that starts 
with 1 must end with $\infty$. The largest element with all finite entries is $(0,1,\dots,n-1)$.
Hasse diagrams for $T_2^B$ and $T_3^B$ are shown in Figure~\ref{figtb23}.

\begin{figure}[htb]
\centering
\includegraphics[height=1.5in]{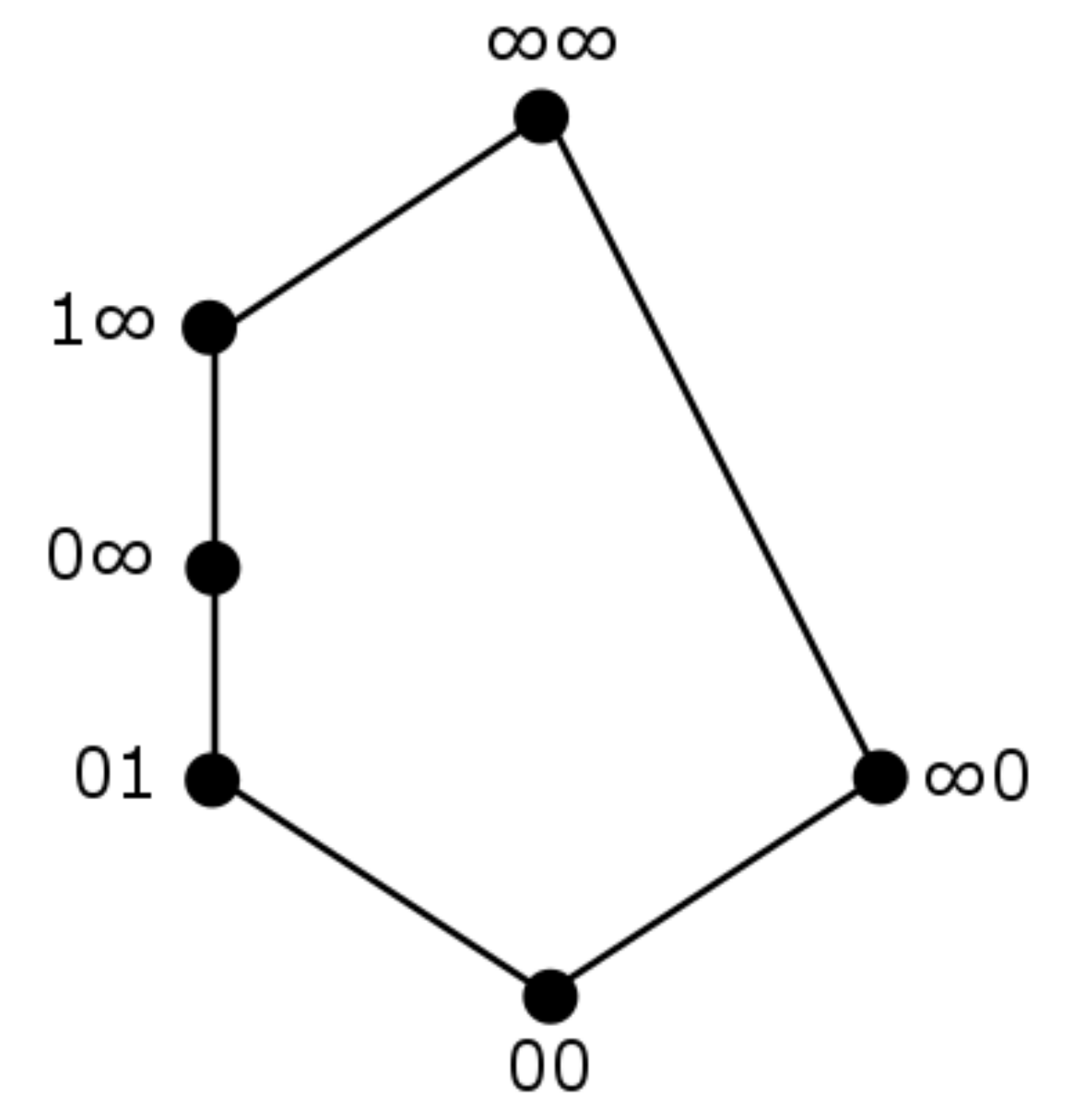} \hskip1in \includegraphics[height=2in]{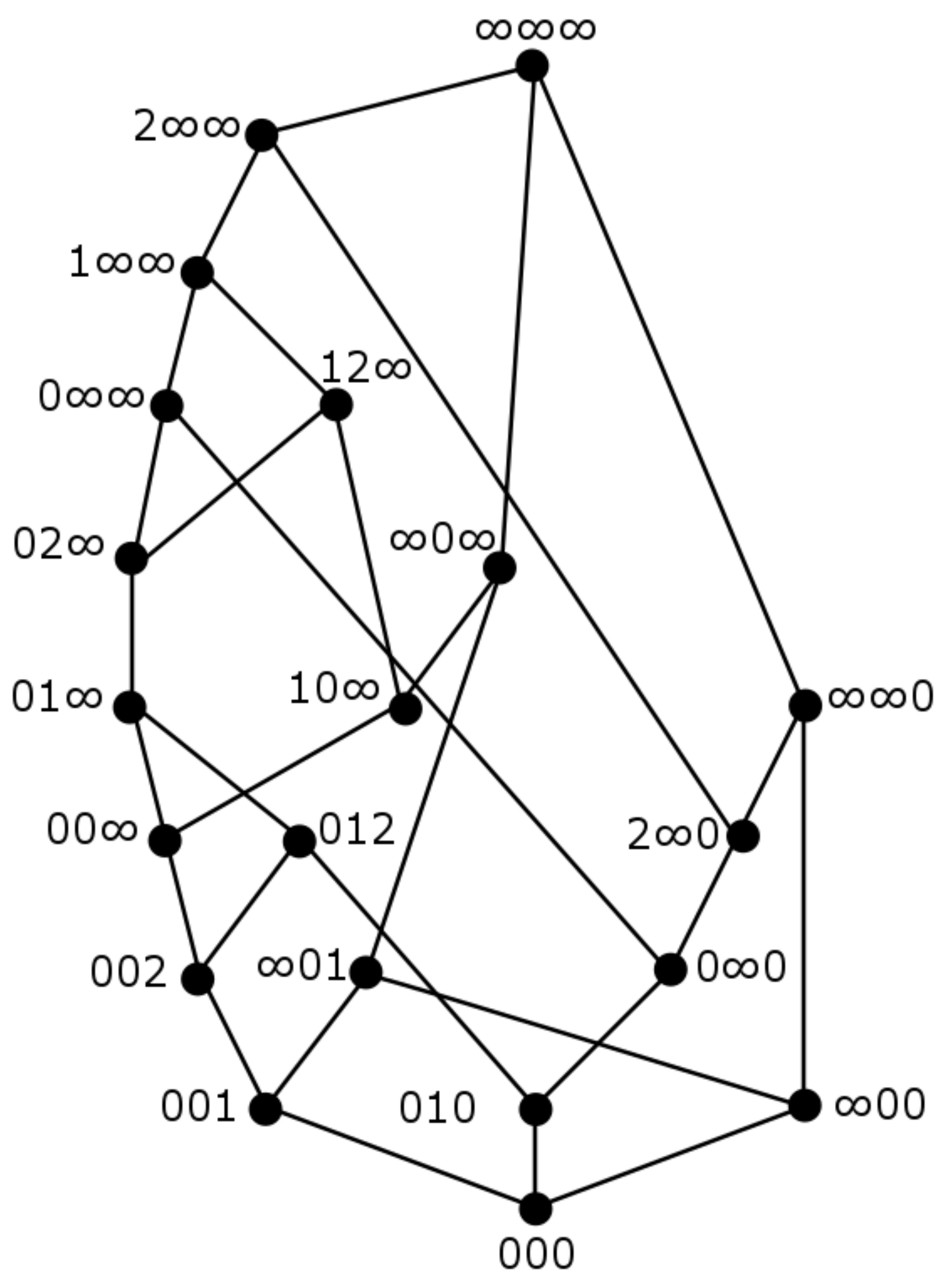}
\caption{$T_2^B$ and $T_3^B$}
\label{figtb23}
\end{figure}

Given any poset $P$, there exists a partition $\lambda(P)$ such that the sum of the first $k$ 
parts of $\lambda(P)$ is the maximum number of elements in a union of $k$ chains in $P$. A 
theorem of Greene and Kleitman states that the conjugate of $\lambda(P)$ has the same property 
for antichains in $P$~\cite{g,gk}. Let $\lambda_k(P)$ denote the $k$th part of this partition.

The poset $T_n^B$ is not graded, so the key to constructing two disjoint chains of maximal length is 
to extract the ``leveled" elements. A poset is \textit{leveled} if every element (except those on 
the top and bottom levels) covers something on the level below and is covered by something on the 
level above. Every element in a leveled poset $P$ is on one of the longest chains of $P$, but this 
is a slightly weaker condition than being a graded poset where every maximal chain must have the same 
length.

Consider the subposet of elements of $T_n^B$ that are on a chain of maximal length. For such an element 
$\mu$, define $r(\mu)$ to be the length of the longest chain from the bottom element to $\mu$. 

\begin{lemma}
Leveled elements appear on level $r(\mu)$, which is the sum of the entries of $\mu$ counting 
each $\infty$ as $n$. Unleveled elements drawn at the lowest possible level will be at or 
below the level of their sum.
\label{lem1}
\end{lemma}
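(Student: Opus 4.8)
Write $s(\mu)$ for the sum of the entries of $\mu$ with each $\infty$ counted as $n$; the plan is to tie the level of every element to $s(\mu)$, so that the whole statement reduces to computing the height $h$ of $T_n^B$. The easy half already yields the second assertion: each entry takes one of the $n+1$ linearly ordered values $0<1<\dots<n-1<\infty$, and moving an entry to the next value increases $s$ by exactly $1$; since the order on $T_n^B$ is componentwise, whenever $\nu$ covers $\mu$ some entry strictly increases and none decreases, so $s(\nu)\ge s(\mu)+1$. Hence along any chain $(0,\dots,0)=\mu_0,\dots,\mu_m=\mu$ one has $m\le s(\mu_m)-s(\mu_0)=s(\mu)$; so the longest chain from the bottom to $\mu$ has length at most $s(\mu)$, and if $\mu$ is drawn just above everything it covers it therefore sits at level at most $s(\mu)$. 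When $\mu$ lies on a chain of maximal length this also gives $r(\mu)\le s(\mu)$.

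For the first assertion, suppose $\mu$ is leveled, i.e.\ lies on a chain of maximal length, and let $t(\mu)$ be the length of the longest chain from $\mu$ up to $(\infty,\dots,\infty)$. Concatenating a longest chain below $\mu$ with a longest chain above $\mu$ gives $r(\mu)+t(\mu)\le h$; but a maximal-length chain through $\mu$ meets it at some position $j$ with $j\le r(\mu)$ and $h-j\le t(\mu)$, which forces $j=r(\mu)$. So $\mu$ sits at position $r(\mu)$ on such a chain, and it suffices to show that along a maximal-length chain $(0,\dots,0)=\mu_0,\dots,\mu_h=(\infty,\dots,\infty)$ one has $s(\mu_k)=k$ for all $k$. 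Since $\sum_k\bigl(s(\mu_{k+1})-s(\mu_k)\bigr)=s(\infty,\dots,\infty)=n^2$ is a sum of $h$ terms each at least $1$, this holds exactly when $h=n^2$.

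The bound $h\le n^2$ is the easy inequality applied to $(0,\dots,0)<(\infty,\dots,\infty)$, so it remains to produce a maximal chain of length $n^2$, i.e.\ one each of whose steps raises a single entry by one value. I would build it in two phases. First raise entry $n$, then entry $n-1$, \dots, then entry $2$, each from $0$ up to its index minus one, reaching the staircase $(0,1,2,\dots,n-1)$ after $\binom n2$ unit steps. Then raise entry $n$, then $n-1$, \dots, then $1$, each all the way up to $\infty$, so that entry $n-j+1$ takes $j$ unit steps through the intervening integers, reaching $(\infty,\dots,\infty)$ after $\binom{n+1}{2}$ further unit steps. Since $\binom n2+\binom{n+1}{2}=n^2$, this is a maximal chain of length $n^2$, giving $h=n^2$, and by the previous paragraph the lemma follows.

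What needs care, though it is essentially bookkeeping, is checking for this chain that every intermediate tuple satisfies rules (\textit{i}) and (\textit{ii}) and that consecutive tuples really form a covering pair (no element lies strictly between them, which is automatic once only one entry moves and it moves by a single value). The one genuinely delicate point is in the second phase: raising an entry past its own index triggers rule (\textit{ii}), which demands an $\infty$ farther to the right, and that $\infty$ must already be present---which is exactly why the entries are sent to $\infty$ from right to left. The real obstacle, I expect, is the conceptual one of recognizing that the entire statement collapses to the single equality $h=n^2$; after that, exhibiting the unit-step maximal chain above is the only substantive work.
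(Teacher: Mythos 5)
Your argument is correct and follows essentially the same strategy as the paper's: the sum $s(\mu)$ bounds the level from above because every covering relation raises some entry, and an explicit unit-step chain of length $n^2$ pins down the height so that leveled elements must sit exactly at level $r(\mu)=s(\mu)$. You substitute your own two-phase staircase chain for the paper's rightmost-increment chain (which the paper defers to the proof of Theorem~\ref{thm1}) and you make explicit the deduction that every maximal-length chain has unit increments in $s$, a step the paper leaves implicit, but the underlying approach is the same.
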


\begin{proof}
Among the leveled elements we can construct at least one chain where each step increases only one entry by exactly 1. 
An explicit description of such a chain will be given in the proof of Theorem~\ref{thm1}. For the unleveled elements, 
every covering relation must involve increasing at least one part by at least 1, so the sum provides an upper bound 
on the maximum number of coverings needed to reach that element from the bottom.
\end{proof}

The longest chain therefore has length $n^2$, so $\lambda_1(T_n^B)=n^2+1$.
Our main result is the following.

\begin{thm}
For $n\ge4$, $\lambda_2(T_n^B)=\lambda_1(T_n^B)-5$.
\label{thm1}
\end{thm}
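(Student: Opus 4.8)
The plan is to prove $\lambda_2(T_n^B)\ge\lambda_1(T_n^B)-5$ by exhibiting two chains whose union has $2n^2-3$ elements, and the reverse inequality by a structural analysis of the elements of small and large entry-sum, following the type $A$ argument of~\cite{ee}. For the lower bound I would first make explicit the longest chain $C$ promised above: starting at $(0,\dots,0)$, raise the last coordinate through the values $0,1,\dots,n-1,\infty$, then raise the next coordinate through $0,1,\dots,n-1,\infty$, and so on, ending at $(\infty,\dots,\infty)$. Property (\textit{i}) keeps each intermediate tuple legal and property (\textit{ii}) fires precisely when a value $n-1$ is promoted to $\infty$, so every step increases $r(\mu)$ by exactly $1$ and $|C|=n^2+1$. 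I would then build a second chain $C'$ with exactly $n^2-4$ elements that is disjoint from $C$: informally, $C'$ also climbs from an initial staircase $0,1,2,\dots$ to a tail of $\infty$'s, but through tuples never used by $C$; it begins at the level-$1$ tuple carrying a single $1$ in coordinate $n-1$, is forced once near the bottom to jump by $2$ (two equal positive neighbors being illegal), threads the tuples ``staircase followed by a block of $\infty$'s'' in an order different from the one $C$ uses, and halts at $(n-2,n-1,\infty,\dots,\infty)$ rather than entering the top. Thus $C'$ forfeits two elements near the bottom (it contains neither $(0,\dots,0)$ nor the tuple over which it jumps) and three near the top, so $|C'|=|C|-5$; since $C\cap C'=\emptyset$, this gives $|C\cup C'|=(n^2+1)+(n^2-4)=2n^2-3=\lambda_1(T_n^B)+(\lambda_1(T_n^B)-5)$.

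For the upper bound, I would assume both chains $C_1,C_2$ are inclusion-maximal (extending a chain never decreases the union) and write $D(C)=(n^2+1)-|C|$ for the deficiency. Since $|C_1\cup C_2|=2(n^2+1)-D(C_1)-D(C_2)-|C_1\cap C_2|$, it suffices to show $D(C_1)+D(C_2)+|C_1\cap C_2|\ge5$. The engine is a local analysis at the two ends of $T_n^B$. Near the top, one checks that $(\infty,\dots,\infty)$, $(n-1,\infty,\dots,\infty)$, and $(n-2,\infty,\dots,\infty)$ are the only elements of entry-sum $n^2$, $n^2-1$, $n^2-2$, so a chain reaching the top either passes through all three or makes a jump within the last three levels; near the bottom, $(0,\dots,0)$ and $(0,\dots,0,1)$ lie on every chain of deficiency $0$, because the remaining small-sum tuples---a lone $1$ or $2$ placed away from the last coordinate, or two separated $1$'s---are quickly pushed off every saturated chain by rules (\textit{i}) and (\textit{ii}). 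Hence a chain of deficiency $0$ is pinned to a run of bottleneck elements at each end, every unit of deficiency a chain spends can free it from at most one such element, and only near the end at which the jump is made; balancing the at most $D(C_1)+D(C_2)$ elements thereby avoided against the bottleneck elements the two chains would otherwise share yields $D(C_1)+D(C_2)+|C_1\cap C_2|\ge5$.

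The main obstacle is making that balancing argument rigorous: it needs an exhaustive catalogue, for each small value $s$ of the entry-sum, of the elements of sum $s$, of which of them actually lie on a saturated chain from $(0,\dots,0)$, and of exactly which covering relations join consecutive levels---the step in which all the arithmetic of rules (\textit{i}) and (\textit{ii}) actually gets used, and the analogue of the combinatorial core of~\cite{ee}. I expect the hypothesis $n\ge4$ to be needed precisely here: for $n=3$ the bottleneck near the bottom and the funnel near the top are too large relative to the short longest chain of length $9$, so the construction of the preceding paragraph cannot be carried out without extra overlap and $\lambda_2(T_3^B)$ is strictly smaller than $\lambda_1(T_3^B)-5$.
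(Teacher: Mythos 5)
Your lower-bound construction is essentially the paper's: the same first chain of $n^2+1$ elements, and the same second chain running from $(0,\dots,0,1,2)$ up to $(n-2,n-1,\infty,\dots,\infty)$, prefixed by the unleveled element $(0,\dots,0,1,0)$ to recover one of the six missing elements; the count $2n^2-3$ is right, and the only thing left implicit is the level-by-level verification that the two chains are disjoint, which the paper does carry out.

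The genuine gap is in the upper bound. Your plan is a direct analysis of a pair of chains via the identity $|C_1\cup C_2|=2(n^2+1)-D(C_1)-D(C_2)-|C_1\cap C_2|$ and a ``balancing'' principle asserting that each unit of deficiency frees a chain from at most one bottleneck element. That principle is exactly the step you concede you cannot make rigorous, and as stated it is in real danger: there are $n-1$ legal tuples of entry-sum $1$ (a lone $1$ in any coordinate from $2$ to $n$ satisfies both rules), so entry-sum levels are not singletons near the bottom, and saying the extra ones are ``pushed off every saturated chain'' does not tell you how they enter the accounting for a second, non-saturated chain; near the top there are unleveled coatoms such as $(\infty,0,\infty,\dots,\infty)$ covered directly by $(\infty,\dots,\infty)$, so a single covering relation bypasses both $(n-1,\infty,\dots,\infty)$ and $(n-2,\infty,\dots,\infty)$, and converting that into a guaranteed deficiency of at least $2$ already requires the height bookkeeping of Lemma~\ref{lem1}. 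The paper avoids all of this by invoking the Greene--Kleitman duality you already have available: it suffices to partition $T_n^B$ into $n^2+1$ antichains of which five are singletons, since then $|C_1\cup C_2|\le\sum_i\min(|A_i|,2)=2(n^2+1)-5$. The partition is obtained by placing every element at its height in the Hasse diagram and shifting all unleveled elements up one level in unison; this isolates $(0,\dots,0)$ and $(0,\dots,0,1)$ at the bottom and $(\infty,\dots,\infty)$, $(n-1,\infty,\dots,\infty)$, $(n-2,\infty,\dots,\infty)$ at the top, with Lemma~\ref{lem1} guaranteeing that the shifted unleveled elements (which start at or below their entry-sum, hence at least $n$ levels down from the top) never collide with these five. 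I recommend you replace your second paragraph with this antichain construction; your catalogue of the three top elements and the observation that every other coatom of $(\infty,\dots,\infty)$ contains a $0$ are exactly the facts needed to verify it.
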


\begin{proof}
First we construct two disjoint chains of the desired lengths, then show that no larger union is possible.
The first chain starts at $(0,\dots,0)$ and increases the rightmost possible part by 1, except that adding 
1 to $n-1$ yields $\infty$. For example, when $n=4$ the chain is 
(0,0,0,0), (0,0,0,1), (0,0,0,2), (0,0,0,3), (0,0,0,$\infty$), (0,0,1,$\infty$), (0,0,2,$\infty$), 
(0,0,3,$\infty$), (0,0,$\infty$,$\infty$), (0,1,$\infty$,$\infty$), (0,2,$\infty$,$\infty$), (0,3,$\infty$,$\infty$), 
(0,$\infty$,$\infty$,$\infty$), (1,$\infty$,$\infty$,$\infty$), (2,$\infty$,$\infty$,$\infty$), (3,$\infty$,$\infty$,$\infty$), 
($\infty$,$\infty$,$\infty$,$\infty$).

The second chain begins at $(0,\dots,0,1,2)$ and increases the leftmost possible part by 1, ending at 
$(n-2,n-1,\infty,\dots,\infty)$. When $n=4$, the chain is 
(0,0,1,2), (0,0,1,3), (0,0,2,3), (0,1,2,3), (0,1,2,$\infty$), (0,1,3,$\infty$), (0,2,3,$\infty$), 
(1,2,3,$\infty$), (1,2,$\infty$,$\infty$), (1,3,$\infty$,$\infty$), (2,3,$\infty$,$\infty$).

These two chains are disjoint because for larger values of $n$ the elements that are on the same level 
from each chain will have a different number of 0 and/or $\infty$ entries. In 
particular, the first chain gains an $\infty$ on every $n$th level. Up to level $n$ the first chain has 
only one non-zero entry while the second chain always has at least two nonzero entries.
The second chain acquires its first 
$\infty$ at level $\frac{n(n-1)}{2}+1$, which is greater than $2n$ for $n\ge5$ and has only one 0 at that point, 
which becomes a 1 $n-1$ levels later. 
The first chain has at least one 0 until level $n^2-n+1$. At that point all but one entry is $\infty$, while the 
second chain always has at least two finite entries, thus establishing that the two chains are disjoint.

This second chain is six elements smaller than the first 
rather than the five needed for the theorem. We append the unleveled element $(0,\dots,0,1,0)$ at 
the beginning of the second chain to make it one longer, constructively proving that 
$\lambda_2(T_n^B)\ge\lambda_1(T_n^B)-5$.

To show that two chains cannot contain any more elements it suffices (by the Greene-Kleitman theorem) to 
show that $T_n^B$ can be partitioned into $n^2+1$ antichains, five of which comprise a single element. 
To that end, we start by observing that a Hasse diagram provides a decomposition into antichains via 
its levels. To get the desired antichains, start with every element placed on the lowest possible level. 
All of the unleveled elements can be shifted up one level as long as they are all moved together. This 
makes the bottom two levels comprise one element each, namely $(0,\dots,0)$ and $(0,\dots,0,1)$.
The top element $(\infty,\dots,\infty)$ covers elements with only one finite entry. One of these is 
the leveled element $(n-1,\infty,\dots,\infty)$ and the rest contain a 0 because no other finite number 
can be preceded by $\infty$. The unleveled elements covered by the top element are therefore started at least 
$n$ levels down by Lemma~\ref{lem1}, so even moving them up 1 still keeps them out of the top three levels 
for $n\ge4$. The elements covered by $(n-1,\infty,\dots,\infty)$ are similarly the leveled element 
$(n-2,\infty,\dots,\infty)$ and those that replace an $\infty$ with a 0 and thus start too many levels down 
to interfere. We therefore have the antichains to show that $\lambda_2(T_n^B)\le\lambda_1(T_n^B)-5$, which 
together with the chain construction completes the proof.
\end{proof}

An unlabeled Hasse diagram of $T_4^B$ is shown in Figure~\ref{figtb4}. The only leveled elements in this 
poset are the ones used in the construction.

\begin{figure}[htb]
	\centering
	\includegraphics[height=3in]{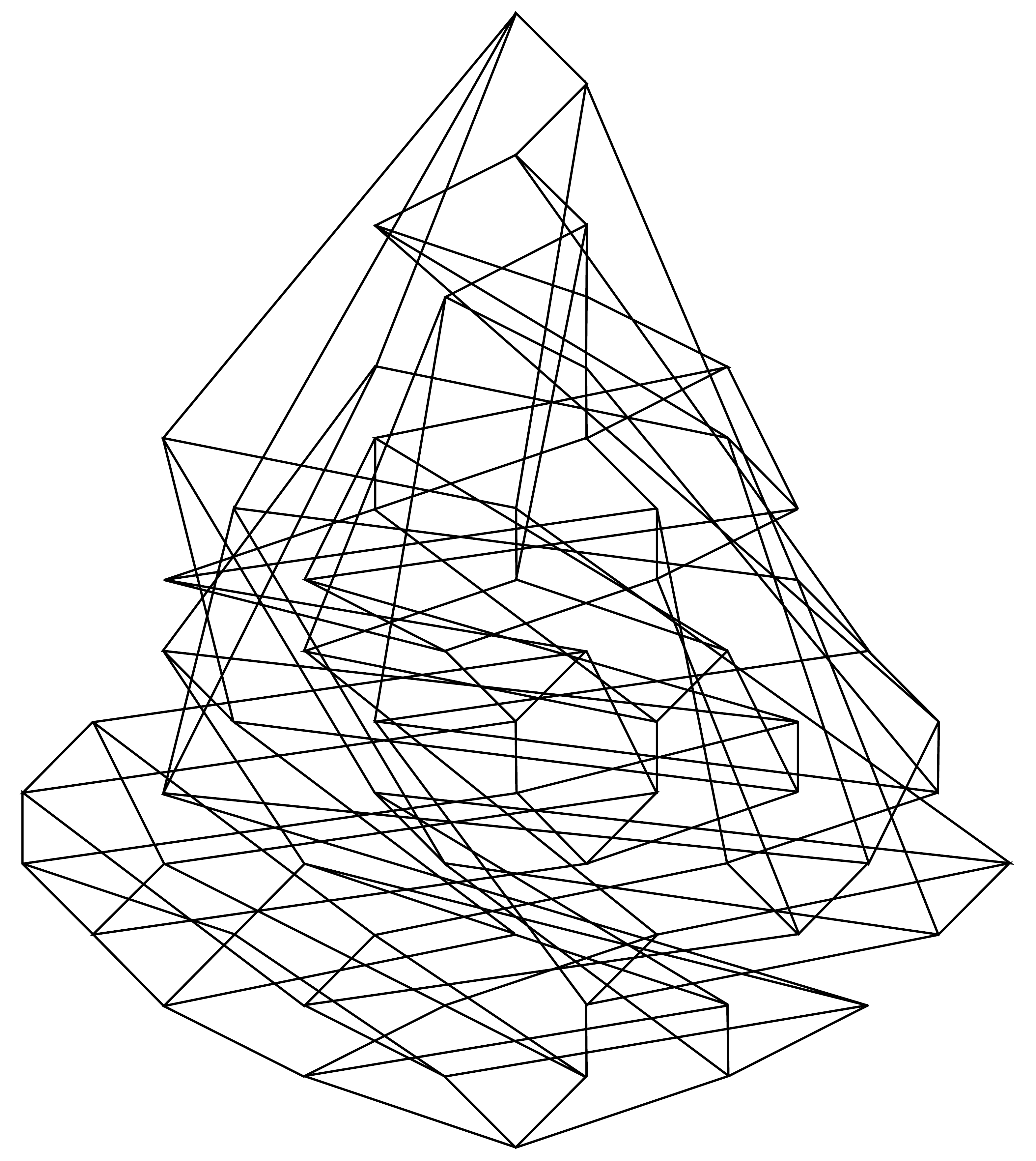}
	\caption{$T_4^B$}
	\label{figtb4}
\end{figure}

Unlike $T_n$, $T_n^B$ is not self-dual, hence the odd difference in chain lengths, though the leveled elements   
are self-dual. A third long disjoint chain begins to emerge at $n=5$, where there are six levels of one 
element each and four levels of two elements each within the leveled elements. It appears likely that 
$\lambda_k(T_n^B)-\lambda_{k+1}(T_n^B)$ will be constant for fixed $k$ and large $n$, so computing that 
value for $k>1$ is a natural followup problem. Another area exploration is the type $D$ analogue.

\section*{Acknowledgements}
The authors thank Nathan Reading for suggesting this problem and John Stembridge, whose posets package for 
Maple made Figure~\ref{figtb4} feasible.

\end{document}